\documentclass{amsart}
\usepackage{mathrsfs}
\usepackage{amsmath}
\usepackage{epsfig}
\usepackage{amsfonts}
\usepackage{amssymb}
\usepackage{amsthm}
\usepackage{amscd}
\usepackage[all]{xy}
\usepackage{rotating}
\usepackage{lscape}
\usepackage{amsbsy}
\usepackage{verbatim}
\usepackage{moreverb}

\setlength{\topskip}{0pt}
\setlength{\footskip}{30pt}
\headheight=0pt
\topmargin=0pt
\headsep=18pt
\textheight=603pt 
\textwidth=420pt  
\oddsidemargin=25pt
\evensidemargin=25pt

\title{Homological obstructions to string orientations}
\author{Christopher L.~Douglas}
\thanks{The first author was supported in part by a Miller Research Fellowship.}
\address{Department of Mathematics, University of California, Berkeley, CA 94720, USA}
\email{cdouglas@math.berkeley.edu}
\author{Andr\'e G. Henriques}
\address{Mathematisch Instituut, Universiteit Utrecht, 3508 TA Utrecht, NL}
\email{a.g.henriques@uu.nl}
\author{Michael A.~Hill}
\address{Department of Mathematics, University of Virginia, Charlottesville, VA 22904, USA}
\email{mikehill@virginia.edu}

\usepackage{amsmath}
\usepackage{amsfonts}
\usepackage{amssymb}
\usepackage{amsthm}
\usepackage{comment}
\usepackage{epsfig}
\usepackage{psfrag}
\usepackage{mathrsfs}
\usepackage{amscd}
\usepackage[all]{xy}
\usepackage{amsbsy}
\usepackage{multicol}
\usepackage{latexsym}

\pagestyle{plain}

\newtheorem{thm}{Theorem}[section]
\newtheorem{prop}[thm]{Proposition}

\newtheorem{cor}[thm]{Corollary}

\theoremstyle{definition}
\newtheorem{defn}[thm]{Definition}

\theoremstyle{remark}
\newtheorem{remark}[thm]{Remark}

\newcommand{\F}{\mathbb F}
\newcommand{\A}{\mathcal A}

\newcommand{\ing}{\includegraphics[scale=1]}
\newcommand{\ingm}{\includegraphics[scale=1]}
\newcommand{\ingn}{\includegraphics[scale=.6]}
\newcommand{\ings}{\includegraphics[scale=.65]}
\newcommand{\ingsf}{\includegraphics[height=612.2pt]}
\newcommand{\nid}{\noindent}
\newcommand{\sm}{\wedge}
\newcommand{\ra}{\rightarrow}
\newcommand{\la}{\leftarrow}
\newcommand{\nn}{\nonumber}

\newcommand{\ZZ}{\mathbb Z}

\newcommand{\RP}{\mathbb{R}\mathrm{P}}
\newcommand{\CP}{\mathbb{C}\mathrm{P}}
\newcommand{\HP}{\mathbb{H}\mathrm{P}}
\newcommand{\mP}{\mathcal P}

\begin{document}

\begin{abstract}
We observe that the Poincar\'e duality isomorphism for a string manifold is an isomorphism of modules over the subalgebra $\A(2)$ of the modulo 2 Steenrod algebra.  In particular, the pattern of the operations $Sq^1$, $Sq^2$, and $Sq^4$ on the cohomology of a string manifold has a symmetry around the middle dimension.  We characterize this kind of cohomology operation duality in term of the annihilator of the Thom class of the negative tangent bundle, and in terms of the vanishing of top-degree cohomology operations.  We also indicate how the existence of such an operation-preserving duality implies the integrality of certain polynomials in the Pontryagin classes of the manifold.
\end{abstract}

\maketitle

\section{Introduction}

The special orthogonal group $SO(n)$ is a $0$-connected cover of the orthogonal group $O(n)$, and the spin group $Spin(n)$ is in turn a $2$-connected cover of the orthogonal group.  The next stage of this progression is the string group $String(n)$, which is a $6$-connected cover of the orthogonal group.  Altogether there is a sequence of topological groups, and a corresponding sequence of classifying spaces:
\begin{gather}
O(n) \la SO(n) \la Spin(n) \la String(n) \nn \\
BO(n) \la BSO(n) \la BSpin(n) \la BString(n) \nn
\end{gather}
The latter sequence realizes the early stages of the Whitehead tower of $BO(n)$---indeed for many years, before Tom Goodwillie's elegant denomination ``$BString$", the space $BString(n)$ was simply referred to as ``$BO(n)\langle 8 \rangle$".  The string group is not a Lie group, and therefore requires some care to define; nevertheless there are by now many constructions of $String(n)$, including those based on bundle gerbes~\cite{brylinski}, 2-groups~\cite{bcss, hlinf, pries}, von Neumann algebras~\cite{stolzteichner}, and conformal field theory~\cite{dh-gss}.

A manifold $M$ admits an orientation, a spin orientation, or a string orientation if the classifying map $\tau: M \ra BO(n)$ of its tangent bundle has a lift to, respectively, the classifying space $BSO(n)$, $BSpin(n)$, or $BString(n)$.  There are corresponding notions for the stable tangent bundle: a manifold is stably orientable, stably spin orientable, or stably string orientable if for some $m$ the sum $\tau \oplus \epsilon^m$ of the tangent bundle with a trivial bundle admits a lift to $BSO(n+m)$, $BSpin(n+m)$, or $BString(n+m)$, respectively.  Note, though, that a manifold is stably orientable, stably spin orientable, or stably string orientable if and only if it is respectively orientable, spin orientable, or string orientable.  A manifold $M$ equipped with a particular lift of the map $\tau: M \ra BO(n)$ to $BString(n)$ is called a ``string manifold" or a ``string oriented manifold"; however, we sometimes abuse terminology and refer to the mere existence of such a lift by saying that ``$M$ is string".

As their name would suggest, string manifolds play a fundamental role in string theory, analogous to the role spin manifolds play in quantum mechanics.  Specifically, the propagation of a quantum particle has a global world-line anomaly unless spacetime is a spin manifold.  Similarly the propagation of a quantum string has a global world-sheet anomaly unless spacetime is string.  This observation goes all the way back to Killingback~\cite{killingback}.  Around the same time, Witten~\cite{witten-egqft} related string manifolds to elliptic cohomology by constructing a modular-form-valued invariant for string manifolds---this invariant quickly became known as ``the Witten genus".  The modern formulation~\cite{ahs, ahs-ii} of this relationship is that there is a map of commutative ring spectra $\sigma: MString \ra TMF$ from string bordism to the spectrum of topological modular forms.  In particular, an $n$-dimensional string manifold represents a class in the bordism group $MString_n$ and therefore determines a class in the coefficient group $TMF_n$---that class is the pushforward to a point of the $TMF$-fundamental class of the manifold.

As oriented manifolds exhibit Poincar\'e duality for integral homology, and spin manifolds exhibit Poincar\'e duality for real $K$-theory, so string manifolds exhibit Poincar\'e duality for $TMF$-cohomology.  In this sense, the existence of a string orientation on a manifold tends to be patently visible from its $TMF$-cohomology.  Computing $TMF$-cohomology is not, however, an easy matter.  The purpose of this note is to observe that string orientations also control duality properties in \emph{ordinary} cohomology, not through duality of cohomology groups but through duality of certain cohomology operations.  Specifically we show that in the cohomology $H^\ast(M;\F_2)$ of a string manifold $M$, the pattern of the operations $Sq^1$, $Sq^2$, and $Sq^4$ has a symmetry around the middle dimension; similarly in the cohomology $H^\ast(M;\F_3)$ the operations $\beta$ and $\mP^1$ are symmetrically distributed.  In particular, any asymmetry among these operations obstructs the existence of a string orientation.  In a great many examples, this recognition principle obviates the need to do even the simplest characteristic class computations to determine that a manifold cannot be string.

In section~\ref{sec-psd} we introduce the notion of Poincar\'e self-duality with respect to a collection of cohomology operations, and show that on a manifold this duality can be detected by examining which operations annihilate the Thom class of the negative tangent bundle.  By relating that annihilation condition to the vanishing of characteristic classes, we then present our basic recognition principle for string orientations on manifolds.  We proceed to completely characterize Poincar\'e self-duality in terms of operations into the top-degree cohomology group, and to describe integrality results for the Pontryagin classes of Poincar\'e self-dual manifolds.  In section~\ref{sec-examples} we illustrate the preceding results in a variety of examples.  In particular we include pictures of the cohomology of real, complex, and quaternionic projective spaces, and of $G_2$- and $F_4$-homogeneous spaces, which exhibit different aspects of the duality recognition principles.  The appendix contains a portrait of the subalgebra of the Steenrod algebra relevant for the cohomological duality of string manifolds.

\section{Poincar\'e self-duality and string obstructions} \label{sec-psd}

Throughout $p$ is prime, and $\A$ denotes the modulo $p$ Steenrod algebra.  Recall that any finite spectrum $X$ has a Spanier-Whitehead dual, that is a spectrum $DX$ and a map $DX \sm X \ra S$ such that the resulting homomorphisms $H_i(X; \ZZ) \ra H^{-i}(DX; \ZZ)$ and $H_i(DX; \ZZ) \ra H^{-i}(X; \ZZ)$ are both isomorphisms.

\begin{defn}
Let $\Theta = \{\theta_i, i \in I\} \subset \mspace{1mu} \A$ be a collection of Steenrod operations, and let $\A_{\Theta}$ denote the subalgebra of the Steenrod algebra generated by $\Theta$.  A finite $CW$ complex $X$ is \emph{Poincar\'e self-dual} with respect to the operations $\Theta$, if for some $n$ there exists an isomorphism
\[
H^\ast(DX; \F_p) \cong H^{\ast+n}(X; \F_p)
\]
between the cohomology of $DX$ and $X$ as modules over the subalgebra $\A_{\Theta}$.
\end{defn}
\nid We also refer to the cohomology $H^\ast(X)$ directly as being ``Poincar\'e self-dual", in case the space $X$ is Poincar\'e self-dual.

We will be particularly concerned with the following subalgebras of the Steenrod algebra:
\[
\A(k)=\begin{cases}
\langle Sq^{2^i} | \, 0\leq i\leq k\rangle & p=2, \\
\langle \beta, \mP^{p^i} | \, 0\leq i <k \rangle & p\text{ odd.}
\end{cases}
\]
We sometimes use the notation $\A(k)_2$ or $\A(k)_p$ in order to explicitly specify the prime in question.
Our first observation provides a simple criterion for Poincar\'e self-duality when the space in question is a manifold.

\begin{thm} \label{thm:ZeroThom}
If $M$ is an $n$-dimensional manifold, and $Sq^{2^i}$ on the Thom class $u\in H^{-n} (M^{-\tau};\F_2)$ is zero for all $i\leq k$, then $M$ is Poincar\'e self-dual with respect to $\A(k)$.
\end{thm}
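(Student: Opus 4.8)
The plan is to realize the Poincar\'e self-duality isomorphism explicitly as a Thom isomorphism, and then to show that the hypothesis on $u$ forces that isomorphism to respect the action of $\A(k)$. First I would invoke Atiyah duality: for the closed $n$-manifold $M$, the Spanier--Whitehead dual $DM$ is equivalent to $M^{-\tau}$, the Thom spectrum of the stable normal bundle, which carries the Thom class $u\in H^{-n}(M^{-\tau};\F_2)$. Under this equivalence the Thom isomorphism
\[
\Phi\colon H^{\ast}(M;\F_2)\xrightarrow{\ \cong\ } H^{\ast-n}(M^{-\tau};\F_2),\qquad x\mapsto x\cup u,
\]
is an isomorphism of $\F_2$-vector spaces lowering degree by $n$, and the theorem reduces to showing that, under the stated hypothesis, $\Phi$ is $\A(k)$-linear for the Steenrod action on $H^{\ast}(M^{-\tau};\F_2)$.

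The algebraic input is that $\A(k)$, which by definition is generated by $Sq^1,Sq^2,\dots,Sq^{2^k}$, annihilates $u$ in all positive degrees. Indeed $Sq^b$ lies in the subalgebra generated by those $Sq^{2^j}$ with $2^j\le b$ --- an elementary consequence of the Adem relations --- so $Sq^b\in\A(k)$ whenever $b\le 2^k$; expanding such a $Sq^b$ as a sum of monomials in the generators $Sq^{2^j}$ with $j\le k$ and peeling off the rightmost factor of each monomial, which kills $u$ by hypothesis, shows that $Sq^b(u)=0$ for all $1\le b\le 2^k$.

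Finally I would combine these via the Cartan formula. The Thom spectrum $M^{-\tau}$ is a module over $\Sigma^{\infty}_{+}M$ through the Thom diagonal, and on mod $2$ cohomology this recovers, via $\Phi$, the cup-product $H^{\ast}(M;\F_2)$-module structure on $H^{\ast}(M^{-\tau};\F_2)$; since the total Steenrod square is natural and multiplicative for external products, it obeys the Cartan formula for this pairing. Hence for $i\le k$ and $x\in H^{\ast}(M;\F_2)$,
\[
Sq^{2^i}(x\cup u)=\sum_{a+b=2^i}Sq^{a}(x)\cup Sq^{b}(u)=Sq^{2^i}(x)\cup u,
\]
since every term with $1\le b\le 2^i\le 2^k$ vanishes. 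Thus $\Phi$ commutes with each generator $Sq^{2^i}$ of $\A(k)$, and being additive it commutes with all of $\A(k)$, so $\Phi$ is an isomorphism of $\A(k)$-modules; re-indexing gives $H^{\ast}(DM;\F_2)\cong H^{\ast+n}(M;\F_2)$ as $\A(k)$-modules, which is exactly Poincar\'e self-duality with respect to $\A(k)$. The step I expect to demand the most care is the one glossed above --- checking that the Thom isomorphism is a map of $H^{\ast}(M;\F_2)$-modules and that the Cartan formula is available for the resulting module structure, i.e., the compatibility of the Steenrod action with the Thom diagonal; the rest is formal, modulo routine Adem-relation bookkeeping.
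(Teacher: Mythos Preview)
Your proposal is correct and follows essentially the same route as the paper: invoke Atiyah duality $DM\simeq M^{-\tau}$, use the Thom isomorphism $x\mapsto x\cup u$, observe from the Adem relations that the hypothesis forces $Sq^j u=0$ for all $0<j\le 2^k$, and then apply the Cartan formula to see that $\Phi$ commutes with each generator $Sq^{2^i}$ of $\A(k)$. The only difference is that you spell out more carefully why the Cartan formula applies to the pairing $H^*(M)\otimes H^*(M^{-\tau})\to H^*(M^{-\tau})$ via the Thom diagonal, whereas the paper simply uses it without comment.
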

\begin{proof}
Atiyah~\cite{atiyah} identified the Spanier-Whitehead dual $DM$ with the Thom spectrum $M^{-\tau}$ of the negative tangent bundle.  The Thom isomorphism provides a natural cohomology isomorphism $- \cup u: H^{\ast+n}(M) \cong H^\ast(M^{-\tau})$.  It therefore suffices to check that the Thom isomorphism is a module map for the algebra $\A(k)$.  The hypothesis $Sq^{2^i} u = 0$ for $i \leq k$ implies that $Sq^j u = 0$ for $0 < j < 2^{k+1}$.  As a result, for all $i \leq k$ we have
\[
Sq^{2^i}(x\cup u)=\sum_{j=0}^{2^i} Sq^{2^i-j}(x)\cup Sq^j(u)=Sq^{2^i}(x)\cup u. \qedhere
\]
\end{proof}
\nid The analogous result holds at odd primes: if the operations $\beta, \mP^1, \mP^p, \ldots, \mP^{p^{k-1}}$ annihilate the Thom class $u \in H^{-n} (M^{-\tau};\F_p)$ of the negative tangent bundle of a manifold $M$, then $M$ is Poincar\'e self-dual with respect to $\A(k)_p$.

Note that the homology $H_\ast(X; \F_p)$ of a space has a left action of the Steenrod algebra, defined by $\langle \chi(Sq^i) x, a \rangle = \langle x, Sq^i a \rangle$.  Here $\langle -, - \rangle: H^\ast(X; \F_p) \otimes H_\ast(X; \F_p) \ra \F_p$ is the Kronecker pairing, and $\chi$ is the canonical antiautomorphism.  Indeed, the cohomology $H^\ast(DX;\F_p)$ of the Spanier-Whitehead dual is isomorphic to the homology $H_{-\ast}(X;\F_p)$ as a module over the Steenrod algebra.

\begin{cor}
If $M$ is a manifold and the algebra $\A(k)$ annihilates the Thom class $u \in H^{-n}(M^{-\tau}; \F_p)$, then the Poincar\'e duality isomorphism $H^{\ast+n}(X;\F_p) \cong H_{-\ast}(X;\F_p)$ is an isomorphism of left $\A(k)$-modules.
\end{cor}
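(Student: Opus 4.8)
The plan is to assemble isomorphisms that are, individually, either already established above or standard, and then to identify the resulting composite with the usual Poincar\'e duality map. First, by Theorem~\ref{thm:ZeroThom} (and, at an odd prime, its stated analogue), the hypothesis that $\A(k)$ annihilates $u$ gives that cup product with the Thom class,
\[
-\cup u\colon H^{\ast+n}(M;\F_p)\longrightarrow H^{\ast}(M^{-\tau};\F_p),
\]
is an isomorphism of left $\A(k)$-modules. Next, Atiyah duality provides an equivalence of spectra $M^{-\tau}\simeq DM$, inducing an isomorphism $H^{\ast}(M^{-\tau};\F_p)\cong H^{\ast}(DM;\F_p)$ of left $\A$-modules, hence of left $\A(k)$-modules. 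Finally, as recalled just before the statement, $H^{\ast}(DM;\F_p)\cong H_{-\ast}(M;\F_p)$ as left $\A$-modules, where the action on homology is the one defined through the Kronecker pairing and the antiautomorphism $\chi$. Composing the three gives an isomorphism $H^{\ast+n}(M;\F_p)\cong H_{-\ast}(M;\F_p)$ of left $\A(k)$-modules.

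It would then remain to check that this composite is the classical Poincar\'e duality isomorphism, i.e.\ cap product with the $\F_p$-fundamental class $[M]\in H_n(M;\F_p)$. I would verify this by unwinding Atiyah's construction of the duality pairing $M^{-\tau}\sm M_+\to S$: it comes from the Pontryagin--Thom collapse associated to the diagonal embedding $M\hookrightarrow M\times M$, whose normal bundle is $\tau$, and slant product along this pairing followed by evaluation against $u$ is by construction the cap product with $[M]$. Since this identification is standard (see Atiyah~\cite{atiyah}), I would only indicate it rather than carry it out.

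The one point carrying genuine content is the $\A$-linearity of the identification $H^{\ast}(DM;\F_p)\cong H_{-\ast}(M;\F_p)$ and the $\chi$-twist built into the homology action; but that is exactly what the paragraph preceding the corollary records, so it is used here as a black box. Consequently the only real work is bookkeeping: tracking the degree shift by $n$ through Atiyah duality and dualization, and pinning down which of the a priori available isomorphisms deserves to be called ``the'' Poincar\'e duality isomorphism. That is error-prone in signs and indices but not deep, so I expect no serious obstacle.
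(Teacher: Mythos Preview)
Your approach is correct and is exactly what the paper intends: the corollary is stated without proof, as an immediate consequence of Theorem~\ref{thm:ZeroThom} together with the $\A$-linear identification $H^{\ast}(DM;\F_p)\cong H_{-\ast}(M;\F_p)$ recorded in the paragraph just before it. Your second paragraph, identifying the composite with the classical cap-product Poincar\'e duality map, goes beyond anything the paper writes down but is a sensible elaboration.
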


It is a familiar fact that in an oriented manifold, the nontrivial Bockstein homomorphisms $\beta$ are distributed symmetrically around the middle dimension---see for example the picture of the modulo 3 cohomology of $F_4/G_2$ in Figure~\ref{pic-3}.  Using Theorem~\ref{thm:ZeroThom}, we derive analogous results for spin and string manifolds, as follows.

\begin{prop} \label{prop-spin}
A manifold $M$ is spin if and only if the cohomology $H^\ast(M; \F_2)$ is Poincar\'e self-dual with respect to $Sq^1$ and $Sq^2$, that is with respect to $\A(1)$.
\end{prop}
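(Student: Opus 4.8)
The plan is to prove both implications by relating the spin condition on $M$ to the vanishing of $Sq^1$ and $Sq^2$ on the Thom class $u \in H^{-n}(M^{-\tau};\F_2)$, and then to invoke Theorem~\ref{thm:ZeroThom} for one direction and a Poincar\'e-duality argument for the converse. First I would recall the classical fact that the Stiefel--Whitney classes of the negative tangent bundle $-\tau$ are the conjugates $\bar w_i(\tau)$ of those of $\tau$, and that via the Thom isomorphism $Sq^i u = \phi(\bar w_i)$ where $\phi$ is the Thom iso (the Wu formula for the Thom class). Thus $Sq^1 u = 0 \iff \bar w_1 = 0 \iff w_1 = 0$, i.e.\ $M$ is orientable; and given orientability, $Sq^2 u = 0 \iff \bar w_2 = 0 \iff w_2 = 0$, i.e.\ $M$ is spin. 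So the hypothesis ``$M$ is spin'' is exactly equivalent to ``$Sq^1 u = Sq^2 u = 0$''. The forward implication then follows immediately from Theorem~\ref{thm:ZeroThom} with $k=1$: if $M$ is spin, $Sq^{2^i}u = 0$ for $i \le 1$, so $M$ is Poincar\'e self-dual with respect to $\A(1)$.

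For the converse I would argue contrapositively. Suppose $M$ is not spin. If $M$ is not orientable, then $w_1 \neq 0$, and I would exhibit an explicit asymmetry: the operation $Sq^1\colon H^{n-1}(M;\F_2) \to H^n(M;\F_2)$ is nonzero (by Poincar\'e duality and the Wu formula, $Sq^1$ on the top class's dual detects $v_1 = w_1$), whereas $Sq^1\colon H^{-1}(DM) \to H^0(DM)$ lands in a group that is forced to behave differently under the putative $\A(1)$-isomorphism — more cleanly, I would phrase the whole obstruction in terms of the Thom class as below. If $M$ is orientable but $w_2 \neq 0$, then $Sq^2 u \neq 0$ in $H^{-n+2}(M^{-\tau};\F_2)$, and I must show this forces a failure of $\A(1)$-self-duality. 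The key point is that under the Thom isomorphism $- \cup u\colon H^{\ast+n}(M) \xrightarrow{\ \cong\ } H^\ast(M^{-\tau})$, an $\A(1)$-module isomorphism $H^\ast(M^{-\tau}) \cong H^{\ast+n}(M)$ composed with $-\cup u$ would make $H^\ast(M)$ into a cyclic-type module on which the discrepancy $Sq^2(x\cup u) - Sq^2(x)\cup u = \sum_{j=1}^2 Sq^{2-j}(x)\cup Sq^j(u)$ must vanish for all $x$; taking $x = 1$ gives $Sq^2 u = 0$, a contradiction. I would need to be careful that self-duality with respect to $\A(1)$ as an abstract module isomorphism (not a priori the Thom/Poincar\'e one) still forces $Sq^2 u = 0$; this is handled by the characterization, developed later in the paper, of Poincar\'e self-duality in terms of top-degree operations, or directly by noting that any $\A(1)$-isomorphism $H^\ast(DM)\cong H^{\ast+n}(M)$ must send the bottom class (the Poincar\'e dual of $u$) appropriately and comparing $Sq^2$ on it.

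I expect the main obstacle to be precisely this last subtlety in the converse: promoting ``the canonical Thom/Poincar\'e-duality identification fails to be $\A(1)$-linear'' to ``\emph{no} $\A(1)$-module isomorphism $H^\ast(DM) \cong H^{\ast+n}(M)$ exists.'' A priori there could be an exotic isomorphism. The clean way around this, which I would use, is to observe that whether $Sq^2 u$ vanishes is detected by an $\A(1)$-module invariant of $H^\ast(M^{-\tau})$ that must match the corresponding invariant of $H^{\ast+n}(M)$: concretely, the composite operation $Sq^2$ from the bottom nonzero degree, or equivalently the value of the relevant Wu class, is intrinsic to the $\A(1)$-module. Since on $H^{\ast+n}(M)$ itself this bottom-degree $Sq^2$ is the standard one and records $w_2 \cdot(-)$ against the fundamental class only through $\bar w_2 = w_2$ (by orientability), matching it against $H^\ast(M^{-\tau})$ forces $w_2 = 0$. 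Thus any asymmetry in $Sq^1$ or $Sq^2$ — equivalently, $M$ not being spin — obstructs $\A(1)$-Poincar\'e self-duality, completing the converse.
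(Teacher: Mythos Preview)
Your forward direction is correct and matches the paper exactly: $w_1(\tau)=w_2(\tau)=0$ gives $w_1(-\tau)=w_2(-\tau)=0$, Thom's identity gives $Sq^1 u = Sq^2 u = 0$, and Theorem~\ref{thm:ZeroThom} finishes.

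For the converse, the argument in your middle paragraph does not work as written. Composing a hypothetical $\A(1)$-isomorphism $\phi\colon H^*(M^{-\tau}) \to H^{*+n}(M)$ with the Thom map $-\cup u$ gives only a graded \emph{linear} automorphism of $H^*(M)$; there is no reason this composite is $\A(1)$-linear, so you cannot deduce that the Cartan discrepancy $\sum_{j>0} Sq^{2-j}(x)\cup Sq^j(u)$ vanishes. You are right to flag this as the main obstacle and to fall back on the bottom-class argument in your last paragraph: since $H^{-n}(M^{-\tau})$ and $H^0(M)$ are both one-dimensional over $\F_2$, any $\A(1)$-isomorphism must send $u$ to $1$, whence $Sq^i(u)\mapsto Sq^i(1)=0$ forces $Sq^i u=0$ for $i=1,2$, hence $w_i(-\tau)=0$. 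That is a complete and clean argument; your closing remark about ``$w_2\cdot(-)$ against the fundamental class'' is superfluous and slightly muddled---all that is needed is $Sq^i(1)=0$.

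The paper takes the dual route for the converse: it asserts that self-duality forces $Sq^1$ and $Sq^2$ \emph{into} the top group $H^n(M)$ to vanish (forward-referencing Theorem~\ref{thm-topops}), so the Wu classes $v_1,v_2$ vanish, and then the relations $w_1=v_1$ and $w_2=v_2+Sq^1 v_1$ give the spin conclusion. Your bottom-class version is more direct, bypassing the Wu formalism entirely; the paper's version has the virtue of feeding into the general top-degree characterization developed immediately afterward.
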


\begin{proof}
As before, let $\tau$ denote the tangent bundle of the $n$-dimensional manifold $M$.  If $M$ is spin then $w_1(\tau)$ and $w_2(\tau)$ both vanish.  This implies that the inverse Stiefel-Whitney classes $w_1(-\tau)=w_1(\tau)$ and $w_2(-\tau) = w_1(\tau)^2 + w_2(\tau)$ also vanish.  Thom's identity $w_i(-\tau) \cup u = Sq^i u$ (see~\cite{milnorstash}) ensures that Theorem~\ref{thm:ZeroThom} applies.  Conversely, if the cohomology $H^\ast(M;\F_2)$ is Poincar\'e self-dual, then in particular the operations $Sq^1$ and $Sq^2$ are zero respectively on $H^{n-1}$ and $H^{n-2}$.  This implies (compare the proof of Theorem~\ref{thm-topops} below) that the Wu classes $v_1$ and $v_2$ vanish.  The relations $w_1(\tau) = v_1$ and $w_2(\tau) = v_2 + Sq^1 v_1$ provide the result.
\end{proof}

\begin{prop} \label{prop-string}
If the manifold $M$ is string, then $H^\ast(M;\F_2)$ is Poincar\'e self-dual with respect to $\A(2)_2$, and $H^\ast(M;\F_3)$ is Poincar\'e self-dual with respect to $\A(1)_3$.
\end{prop}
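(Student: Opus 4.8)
The plan is to apply Theorem~\ref{thm:ZeroThom} (and its odd-primary analogue) to the string manifold $M$, exactly as in the proof of Proposition~\ref{prop-spin}, but now needing to kill one more operation at the prime $2$ and the corresponding operations at the prime $3$. Concretely, at $p=2$ I would verify that $Sq^1 u$, $Sq^2 u$, and $Sq^4 u$ all vanish on the Thom class $u\in H^{-n}(M^{-\tau};\F_2)$, so that $\A(2)_2$ annihilates $u$ and the theorem gives Poincar\'e self-duality with respect to $\A(2)_2$. At $p=3$ I would verify that $\beta u$ and $\mP^1 u$ vanish, so that $\A(1)_3$ annihilates $u$ and the odd-primary version of the theorem applies.

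For the prime $2$: Thom's identity $w_i(-\tau)\cup u = Sq^i u$ reduces the claim to showing $w_1(-\tau)=w_2(-\tau)=w_4(-\tau)=0$ in $H^\ast(M;\F_2)$ (and also $w_3(-\tau)=0$, which follows from $w_1=w_2=0$ via the Wu formula, since $Sq^3 u = Sq^1 Sq^2 u$). Because $M$ is string, its tangent bundle lifts through $BString(n)$, so in particular $w_1(\tau)=w_2(\tau)=0$ and $\tfrac12 p_1(\tau)=0$, hence $p_1(\tau)=0$; reducing mod $2$ and using $p_1\equiv w_2^2 + w_4 \pmod 2$ (together with $w_2=0$) gives $w_4(\tau)=0$. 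Since the stable classes $w_i(-\tau)$ are universal polynomials in the $w_j(\tau)$ with $w_1(-\tau)=w_1(\tau)$, $w_2(-\tau)=w_1^2+w_2$, and $w_4(-\tau)$ a polynomial in $w_1,\dots,w_4$ that vanishes once $w_1=w_2=w_3=w_4=0$, all of $w_1(-\tau),w_2(-\tau),w_4(-\tau)$ vanish, and Theorem~\ref{thm:ZeroThom} with $k=2$ finishes the $2$-primary statement.

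For the prime $3$: I would use the mod $3$ analogue of Thom's identity, expressing $\beta u$ and $\mP^1 u$ in terms of the mod $3$ Wu-type characteristic classes of $-\tau$ (equivalently, the classes $q_i(-\tau)$ detecting $\beta$ and $\mP^1$ on the Thom class). The relevant classes are the mod $3$ reduction of $w_1$ for $\beta$ — which vanishes since $M$ is oriented (indeed spin) — and a class in degree $4$ detecting $\mP^1$, which is the mod $3$ reduction of $p_1(-\tau)$; since $p_1(\tau)=0$ for a string manifold and $p_1(-\tau)=-p_1(\tau)$ stably, this also vanishes mod $3$. Hence $\beta u = \mP^1 u = 0$ and the odd-primary form of Theorem~\ref{thm:ZeroThom} with $k=1$ gives Poincar\'e self-duality with respect to $\A(1)_3$.

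The main obstacle is bookkeeping rather than conceptual: one must be careful that the vanishing of $\tfrac12 p_1$ on $BString$ really does force $p_1(\tau)=0$ integrally (so that its mod $2$ and mod $3$ reductions both vanish), and that the stable inverse classes $w_i(-\tau)$ — and the analogous mod $3$ classes — are expressible in terms of $w_1,\dots,w_4$ (resp.\ the low-degree mod $3$ Pontryagin data) so that the hypotheses of Theorem~\ref{thm:ZeroThom} are genuinely met in the stated range. The passage through $Sq^3 u = Sq^1 Sq^2 u = 0$ (and the general fact noted in the proof of Theorem~\ref{thm:ZeroThom} that killing the $Sq^{2^i}$ for $i\le k$ kills all $Sq^j$ for $0<j<2^{k+1}$) is what lets us get away with only checking $w_1,w_2,w_4$ rather than all $w_i$ with $i<8$.
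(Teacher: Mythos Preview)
Your overall strategy---reduce to Theorem~\ref{thm:ZeroThom} by killing the relevant Steenrod operations on the Thom class via Thom's identity---is exactly the paper's, and your $3$-primary argument is essentially correct (though ``the mod $3$ reduction of $w_1$'' is not meaningful; what you want is simply that $M$ is oriented, so the integral Thom class exists and its mod $3$ reduction has $\beta u=0$).

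The $2$-primary step, however, has a genuine gap. The congruence you invoke, $p_1\equiv w_2^2+w_4\pmod 2$, is false: for a real bundle $\xi$ one has $c_i(\xi\otimes\mathbb{C})\equiv w_i(\xi)^2\pmod 2$, hence $p_1\equiv w_2^2\pmod 2$. (Check this on $\CP^2$: $p_1(\tau)=3x^2$ reduces to $x^2=w_2^2$, while $w_2^2+w_4=x^2+x^2=0$.) With the correct formula, the vanishing of $p_1(\tau)$ gives only $w_2(\tau)^2=0$, which you already knew, and says nothing about $w_4(\tau)$. So your deduction $w_4(\tau)=0$ is unjustified as written.

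The paper repairs this by using a sharper fact: on $BSpin$ the mod $2$ reduction of the generator $\tfrac{p_1}{2}\in H^4(BSpin;\ZZ)$ is $w_4$, so the string hypothesis $\tfrac{p_1}{2}(\tau)=0$ directly forces $w_4(\tau)=0$. An even quicker alternative is to note that $BString$ is $7$-connected, so $H^4(BString;\F_2)=0$ and $w_4(\tau)$ vanishes automatically for any string manifold. Either way, once $w_1=w_2=w_4=0$ your remaining bookkeeping (the explicit formula $w_4(-\tau)=w_1^4+w_1^2w_2+w_2^2+w_4$ and the application of Theorem~\ref{thm:ZeroThom}) goes through exactly as in the paper.
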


\begin{proof}
By the assumption, the classes $w_1(\tau)$, $w_2(\tau)$, and $\frac{p_1}{2}(\tau)$ vanish.  The reduction modulo 2 of $\frac{p_1}{2}$ is $w_4$.  As in the spin case $w_1(-\tau)$ and $w_2(-\tau)$ vanish, and similarly $w_4(-\tau) = w_1(\tau)^4 + w_1(\tau)^2 w_2(\tau) + w_2(\tau)^2 + w_4(\tau)$ vanishes.  Again Thom's identity handles the rest.

All oriented manifolds are Poincar\'e self-dual with respect to the odd Bocksteins.  For the remainder of $\A(1)_3$, note that the modulo 3 reduction $\left[\frac{p_1}{2}\right]$ of $\frac{p_1}{2}$ satisfies $\left[\frac{p_1}{2}\right](-\tau) = - \left[\frac{p_1}{2}\right](\tau)$ and, by Wu~\cite{wu}, there is a 3-primary Thom identity $\left[\frac{p_1}{2}\right](-\tau) \cup u = - \mP^1 u$.  
\end{proof}

\begin{remark} \label{remark-converse}
The converse to Proposition~\ref{prop-string} is not true.  For example, the cohomologies of $\CP^{11}$ and $\HP^7$---see Figure~\ref{pic-1}---are both Poincar\'e self-dual with respect to $\A(2)_2$ and $\A(1)_3$, but neither manifold is string.
\end{remark}

Roughly speaking, we have seen that oriented manifolds have a $Sq^1$ duality, spin manifolds have a $Sq^2$ duality, and string manifolds have a $Sq^4$ duality.  This progression continues one stage further.  A manifold $M$ is called ``5-brane" if the classifying map $\tau: M \ra BO$ of its tangent bundle admits a lift to the connected cover $BO \langle 9 \rangle$.  Such manifolds exhibit a $Sq^8$ duality, as follows.

\begin{prop} \label{prop-5brane}
If the manifold $M$ is 5-brane, then $H^\ast(M;\F_2)$ is Poincar\'e self-dual with respect to $\A(3)_2$, and $H^\ast(M;\F_5)$ is Poincar\'e self-dual with respect to $\A(1)_5$.
\end{prop}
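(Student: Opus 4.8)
The plan is to mimic the proofs of Propositions~\ref{prop-spin} and~\ref{prop-string} and verify the hypotheses of Theorem~\ref{thm:ZeroThom} (at $p=2$, with $k=3$) together with its odd-primary analogue (at $p=5$, with $k=1$); that is, to check that the relevant Steenrod operations annihilate the Thom class $u\in H^{-n}(M^{-\tau};\F_p)$ of the negative tangent bundle.

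Since a $5$-brane manifold is in particular string, Proposition~\ref{prop-string} (or rather its proof) already supplies $Sq^1u=Sq^2u=Sq^4u=0$ at $p=2$, and $\beta u=0$ at $p=5$ holds because a $5$-brane manifold is oriented, so $u$ is the reduction of an integral Thom class. It therefore remains to extract from the additional $5$-brane datum the two vanishing statements $Sq^8u=0$ (at $p=2$) and $\mP^1 u=0$ (at $p=5$); granting these, $H^\ast(M;\F_2)$ is Poincar\'e self-dual with respect to $\langle Sq^1,Sq^2,Sq^4,Sq^8\rangle=\A(3)_2$ and $H^\ast(M;\F_5)$ is Poincar\'e self-dual with respect to $\langle\beta,\mP^1\rangle=\A(1)_5$, by Theorem~\ref{thm:ZeroThom} and the remark following it.

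For those two vanishings I would use that the lift of $\tau$ to $BO\langle 9\rangle$ postulated by the $5$-brane hypothesis kills all characteristic classes of degree $\le 8$: the space $BO\langle 9\rangle$ is $8$-connected, so $H^i(BO\langle 9\rangle;\F_p)=0$ for $1\le i\le 8$ and every prime $p$. Moreover $-\tau$ is again $5$-brane, since $BO\langle 9\rangle$ is a connective cover of the infinite loop space $BO$ and hence closed under Whitney sums and additive inverses. Thus $w_8(-\tau)=0$, and Thom's identity $w_i(-\tau)\cup u=Sq^i u$ gives $Sq^8u=0$; and the degree-$8$ Wu class governing $\mP^1$ on a mod $5$ oriented Thom class---a polynomial in the Pontryagin classes, the $p=5$ counterpart of the class $[\tfrac{p_1}{2}]$ used in Proposition~\ref{prop-string}---likewise vanishes on $-\tau$, giving $\mP^1u=0$. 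One can also phrase this with explicit classes in the style of Proposition~\ref{prop-string}: a string structure already kills $w_1(\tau),\dots,w_7(\tau)$, so $w_8(-\tau)=w_8(\tau)$ by inverting the total Stiefel--Whitney class, and the fractional Pontryagin class in $H^8(BO\langle 8\rangle;\ZZ)\cong\ZZ$ obstructing the lift to $BO\langle 9\rangle$ reduces modulo $2$ to $w_8$ and, because $p_1(\tau)=0$, is a rational multiple of $p_2$ whose mod $5$ reduction carries the $\mP^1$ Wu class.

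The step I expect to require the most care is the $5$-primary Thom--Wu identity for $\mP^1$: one needs the degree-$8$ analogue of the identity $[\tfrac{p_1}{2}](-\tau)\cup u=-\mP^1u$ that Proposition~\ref{prop-string} attributes to Wu, namely an explicit formula for $\mP^1$ acting on the Thom class of a mod $5$ oriented bundle in terms of Pontryagin classes. Once that Wu class has been located inside $H^8(BO;\F_5)$, its vanishing on the $5$-brane bundle $-\tau$ is automatic from the $8$-connectivity of $BO\langle 9\rangle$, and the argument concludes as above; beyond that point, every remaining step is a verbatim reprise of the string case.
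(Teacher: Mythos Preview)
Your proposal is correct and follows essentially the same approach as the paper: verify the hypotheses of Theorem~\ref{thm:ZeroThom} (and its odd-primary analogue) by showing the relevant characteristic classes of $-\tau$ vanish, then invoke the Thom identity. The only cosmetic difference is that you appeal directly to the $8$-connectivity of $BO\langle 9\rangle$ (and the fact that $-\tau$ is again $5$-brane) to kill all degree-$\le 8$ classes at once, whereas the paper instead identifies the specific obstruction classes---$w_8$ at $p=2$ and $[p_2]$ at $p=5$---via the Serre spectral sequence for $K(\ZZ,3)\to BString\to BSpin$, and then writes down the explicit $5$-primary Thom identity $\mP^1_5 u = [p_1^2+3p_2](-\tau)\cup u$ you were anticipating.
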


\begin{proof}
A string manifold $M$ is 5-brane if and only if the composite $\tau: M \ra BString \xrightarrow{\phi} K(\ZZ,8)$ is null, where $\phi$ is a generator of $H^8(BString;\ZZ)$.  The Serre spectral sequence for the fibration $K(\ZZ,3) \ra BString \ra BSpin$ shows that the nontrivial element in $H^8(BString;\F_2)$, that is the modulo 2 reduction of $\phi$, is $w_8$.  It follows that the Stiefel-Whitney classes $\{w_i(\tau) \, | \, i \leq 8\}$ and therefore the inverse Stiefel-Whitney classes $\{w_i(-\tau) \, | \, i \leq 8\}$ all vanish, for $\tau$ the tangent bundle of a 5-brane manifold $M$.  The Thom identity then ensures that $Sq^8$ annihilates the Thom class of the negative tangent bundle of $M$, and Theorem~\ref{thm:ZeroThom} provides Poincar\'e self-duality with respect to $\A(3)_2$.

The modulo 5 Serre spectral sequence for $K(\ZZ,3) \ra BString \ra BSpin$ shows that the reduction $[p_2]$ of the second Pontryagin class generates $H^8(BString;\F_5)$.  The classes $[p_1] (-\tau) = - [p_1] (\tau)$ and $[p_2] (-\tau) = - [p_2] (\tau) + [p_1] (\tau)^2$ therefore both vanish when the manifold in question is 5-brane.  The 5-primary Thom identity $\left[p_1^2 + 3 p_2\right] (-\tau) \cup u = \mP^1_5 u$ implies that $\mP^1_5 u = 0$, as needed.
\end{proof}

\begin{remark}
The pullback of the second Pontryagin class $p_2$ to $BString$ is $6$ times the 5-brane obstruction class $\phi \in H^8(BString;\ZZ)$; we therefore refer to $\phi$ as ``$\frac{p_2}{6}$".  The 6-divisibility of $p_2$ can be determined by comparing $H^8(BString;\ZZ)$ to $H^8(BU\langle 8 \rangle;\ZZ)$, via either the complexification map $BString \ra BU\langle 8 \rangle$ or the inclusion $BU\langle 8 \rangle \ra BString$; the Chern class $c_4$ is $6$-divisible in $BU\langle 8 \rangle$ by a direct Serre spectral sequence computation.  The general divisibility of the Pontryagin classes $p_k$ in $H^{4k}(BO\langle 4k \rangle;\ZZ)$, namely $(2k-1)!$ if $k$ is even and $2 \cdot (2k-1)!$ if $k$ is odd, is due to Bott~\cite{bott}.

Note that $\frac{p_2}{6}$ is a more intricate obstruction than $\frac{p_1}{2}$.  The latter reduces modulo $2$ to $w_4$ and modulo an odd prime $p$ to $\frac{p+1}{2} p_1$.  The former reduces modulo $2$ to $w_8$ and modulo $p>3$ to $\frac{p+1}{6} p_2$ or $\frac{1-p}{6} p_2$, but there is no primary characteristic class that controls the modulo $3$ reduction of $\frac{p_2}{6}$.  This complexity is illustrated in section~\ref{sec-examples} by the homogeneous space $F_4/G_2$.
\end{remark}

\begin{prop} \label{prop-framed}
If the manifold $M$ is stably framed, then for any prime $p$, the cohomology $H^\ast(M;\F_p)$ is Poincar\'e self-dual with respect to the entire Steenrod algebra.
\end{prop}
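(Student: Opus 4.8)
The plan is to reduce to the criterion of Theorem~\ref{thm:ZeroThom}, exploiting the fact that a stably framed manifold has trivial negative tangent bundle.

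First I would record the geometric input. If $M$ is stably framed, then its stable tangent bundle $\tau$ is stably trivial, and hence so is the negative tangent bundle $-\tau$. Via Atiyah's identification $DM \simeq M^{-\tau}$ (used already in the proof of Theorem~\ref{thm:ZeroThom}), the Thom spectrum $M^{-\tau}$ is therefore equivalent to the formal desuspension $\Sigma^{-n}\Sigma^\infty M_+$, and under this equivalence the Thom class $u \in H^{-n}(M^{-\tau};\F_p)$ corresponds to the suitably shifted unit class $1 \in H^0(M_+;\F_p)$; in particular the Thom isomorphism $-\cup u$ is the iterated (de)suspension isomorphism.

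Next I would observe that every element of the augmentation ideal of $\A$ annihilates $u$. Indeed, $Sq^i 1 = 0$ and $\mP^i 1 = 0$ for $i>0$, and $\beta 1 = 0$, so $\theta u = 0$ for all $\theta$ in the augmentation ideal. (Alternatively, and in the spirit of Propositions~\ref{prop-spin}--\ref{prop-5brane}, this follows from Thom's identity $Sq^i u = w_i(-\tau)\cup u$ together with its odd-primary analogues due to Wu, since all characteristic classes of the trivial bundle $-\tau$ vanish and $u$ is the reduction of an integral Thom class, so also $\beta u = 0$.) Then the Cartan-formula computation from the proof of Theorem~\ref{thm:ZeroThom}, carried out now with no restriction on the degree, gives for every $Sq^i$ when $p=2$, and for every $\mP^i$ and for $\beta$ when $p$ is odd, the identity
\[
\theta(x \cup u) = \sum_j \theta'_j(x)\cup \theta''_j(u) = \theta(x)\cup u .
\]
Hence the Thom isomorphism $-\cup u : H^{\ast+n}(M;\F_p) \xrightarrow{\ \cong\ } H^\ast(M^{-\tau};\F_p) \cong H^\ast(DM;\F_p)$ is an isomorphism of modules over the subalgebra generated by all these operations, which is the full Steenrod algebra $\A$. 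Thus $M$ is Poincar\'e self-dual with respect to $\A$.

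There is no substantial obstacle here: the only point demanding a little care is the identification of the Thom class of the (stably) trivial bundle $-\tau$ with the unit class \emph{as a module over $\A$}---equivalently, verifying $\beta u = 0$ in addition to $Sq^i u = 0$ and $\mP^i u = 0$---so that the argument genuinely covers the Bockstein and not merely the reduced power operations at odd primes.
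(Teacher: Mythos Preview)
Your proposal is correct and rests on the same observation as the paper's proof: since $-\tau$ is stably trivial, $DM \simeq M^{-\tau} \simeq \Sigma^{-n}M_+$, and the Thom isomorphism is the (de)suspension isomorphism, hence automatically $\A$-linear. The paper stops there in one line; your subsequent verification that every $\theta$ in the augmentation ideal kills $u$ and the Cartan-formula computation are redundant once you have identified $-\cup u$ with suspension, though of course they do no harm.
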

\begin{proof}
As modules over the Steenrod algebra, we have isomorphisms
\[
H^\ast(DM;\F_p) = H^\ast(M^{-\tau};\F_p) \cong H^\ast(\Sigma^{-n}M;\F_p) = H^{\ast+n}(M;\F_p). \qedhere
\]
\end{proof}

The above propositions describe obstructions to manifolds admitting spin, string, 5-brane, or stably framed structures, and can be reformulated as simple recognition principles---we do so in the string case:
\begin{quote}
\emph{If the pattern of $Sq^1$ and $Sq^2$ operations in the cohomology $H^\ast(M;\F_2)$ of a manifold $M$ is not symmetrically distributed about the middle dimension, or if the pattern of $Sq^4$ operations is not symmetric to the pattern of $\chi(Sq^4) = Sq^4 + Sq^2 Sq^2$ operations, then $M$ is not string.  Similarly if the pattern of $\beta$ and $\mP^1$ operations on $H^\ast(M;\F_3)$ is not symmetric, then $M$ is not string.}
\end{quote}
In particular, if the composite $Sq^2 Sq^2$ vanishes on the cohomology $H^\ast(M;\F_2)$ of a manifold $M$, and the full pattern of $Sq^1$, $Sq^2$, and $Sq^4$ operations is not symmetric around the middle dimension, then $M$ cannot be string.

No Steenrod operation on the cohomology of a space can begin in degree $0$.  Therefore, if an $n$-dimensional manifold $M$ has a nontrivial $Sq^1$, $Sq^2$, $Sq^4$, $\beta_3$, or $\mP^1_3$ with target the top-dimensional cohomology group $H^n(M)$, then the cohomology $H^\ast(M)$ has an operational asymmetry that prevents the existence of a string orientation on $M$.  In fact, the existence of an operation into the top-dimensional cohomology group completely detects whether or not the manifold is Poincar\'e self-dual with respect to the corresponding subalgebra of the Steenrod algebra.  The general $2$-primary result is as follows.

\begin{thm} \label{thm-topops}
Let $M$ be an $n$-dimensional manifold.  The operations $Sq^1, Sq^2, \ldots, Sq^{2^k}$ with target $H^n(M;\F_2)$ are all trivial if and only if $H^\ast(M;\F_2)$ is Poincar\'e self-dual with respect to $\A(k)$.
\end{thm}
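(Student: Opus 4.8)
The plan is to convert both implications into statements about the Steenrod action on the Thom class of the negative tangent bundle, via Atiyah's identification $DM\simeq M^{-\tau}$ and the isomorphism $H^\ast(DM;\F_2)\cong H_{-\ast}(M;\F_2)$ of $\A$-modules recorded above; equivalently, the Spanier-Whitehead pairing $H^a(DM)\otimes H^{-a}(M)\to\F_2$ is nonsingular and makes $Sq^j$ on the $DM$-side adjoint to $\chi(Sq^j)$ on the $M$-side. The implication ``Poincar\'e self-dual with respect to $\A(k)$ $\Rightarrow$ $Sq^1,\dots,Sq^{2^k}$ kill $H^n(M)$'' is the easy one. Since $DM\simeq M^{-\tau}$ has cohomology concentrated in degrees $-n$ through $0$, I first note that $Sq^{2^i}\colon H^{-2^i}(DM)\to H^0(DM)$ is zero for every $i$: for $a\in H^{-2^i}(DM)$ and $b\in H^0(M)$ one has $\langle Sq^{2^i}a,b\rangle=\langle a,\chi(Sq^{2^i})b\rangle=0$, because $\chi(Sq^{2^i})$ has positive degree and so annihilates $H^0(M)$. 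Transporting this vanishing across the assumed $\A(k)$-module isomorphism $H^\ast(DM)\cong H^{\ast+n}(M)$ --- legitimate as $Sq^{2^i}\in\A(k)$ for $i\le k$ --- gives that $Sq^{2^i}\colon H^{n-2^i}(M)\to H^n(M)$ is zero for $i\le k$.

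For the converse, assume $Sq^1,\dots,Sq^{2^k}$ all have trivial image in $H^n(M;\F_2)$. By Theorem~\ref{thm:ZeroThom} it suffices to show that $Sq^{2^i}$ annihilates the Thom class $u\in H^{-n}(M^{-\tau};\F_2)$ for each $i\le k$. The class $u$ corresponds to the fundamental class $[M]\in H_n(M)$ under $H^{-n}(DM)\cong H_n(M)$, so adjunction gives, for $c\in H^{n-2^i}(M)$, the identity $\langle Sq^{2^i}u,c\rangle=\langle[M],\chi(Sq^{2^i})c\rangle=\langle\chi(Sq^{2^i})c,[M]\rangle$. Hence $Sq^{2^i}u=0$ if and only if $\chi(Sq^{2^i})$ has trivial image in $H^n(M)$.

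The theorem therefore reduces to the algebraic observation that the positive-degree elements of $\A$ with trivial image in $H^n(M)$ form a subalgebra: if $\theta$ and $\eta$ each kill the top group then, for $x\in H^{n-\deg\theta-\deg\eta}(M)$, the class $\eta x$ already lies in $H^{n-\deg\theta}(M)$, so $\theta\eta x=\theta(\eta x)=0$. By hypothesis this subalgebra contains $Sq^{2^0},\dots,Sq^{2^k}$, hence every sum of products of them, and so every element of $\A$ of positive degree $<2^{k+1}$ --- since $Sq^j$ for $j$ not a power of two is decomposable, a degree induction shows $\A(k)$ coincides with $\A$ in that range. In particular $\chi(Sq^{2^i})$, being of degree $2^i\le 2^k$, kills $H^n(M)$, so $Sq^{2^i}u=0$ for all $i\le k$ and Theorem~\ref{thm:ZeroThom} delivers the conclusion.

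The weight of the theorem is entirely in the converse, and within it the only step that is not formal duality is the subalgebra observation, which promotes ``$Sq^{2^i}$ kills the top for $i\le k$'' to ``everything of degree $\le 2^k$ kills the top''. The point I expect to need the most care is the adjunction identity $\langle Sq^{2^i}u,c\rangle=\langle\chi(Sq^{2^i})c,[M]\rangle$: one has to keep the antipode $\chi(Sq^{2^i})$ --- not $Sq^{2^i}$ itself --- on the manifold side, and remember that identifying $u$ with $[M]$ is precisely what turns $\langle u,-\rangle$ on $H^n(M)$ into evaluation on the fundamental class. The converse can also be run through Wu classes: triviality of $Sq^{2^i}$ on the top means $v_{2^i}=0$, the subalgebra observation then forces $v_j=0$ for all $j\le 2^k$, Wu's formula $w(\tau)=Sq(v)$ makes $w_j(\tau)$ and hence $w_j(-\tau)$ vanish for $j\le 2^k$, and Thom's identity $w_j(-\tau)\cup u=Sq^j u$ concludes; the homological route above is shorter.
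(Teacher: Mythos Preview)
Your argument is correct, and you have in fact anticipated the paper's approach in your closing paragraph: the paper proves the nontrivial implication precisely via Wu classes.  From the vanishing of $Sq^{2^i}$ into the top it infers (using, implicitly, exactly your ``subalgebra observation'') that $v_j=0$ for $j<2^{k+1}$, then invokes Wu's formula $w(\tau)=Sq(v)$ to kill $w_j(\tau)$ and $w_j(-\tau)$ in that range, and finally applies Thom's identity $Sq^j u=w_j(-\tau)\cup u$ before quoting Theorem~\ref{thm:ZeroThom}.  Your homological route---pairing $Sq^{2^i}u$ against $H^{n-2^i}(M)$ and moving the operation across as $\chi(Sq^{2^i})$---bypasses the characteristic-class layer entirely and is genuinely shorter.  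What the paper's detour buys is the explicit vanishing of the Wu and Stiefel--Whitney classes themselves, which it then reuses in the integrality statements about Pontryagin polynomials that follow the theorem; your argument yields the self-duality but not these intermediate vanishings.  One small correction of language: the set of positive-degree operations with trivial image in $H^n(M)$ is a \emph{right ideal} in $\A_{>0}$ (your computation $\theta\eta x=\theta(\eta x)=0$ uses only $\theta\in S$), not merely a subalgebra---this is harmless for your purposes but worth stating accurately.
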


\begin{proof}
The Wu classes $v_i$ realize the top-degree Steenrod operations in terms of the cup product, in the sense that
\[
\langle Sq^i x,[M]\rangle=\langle v_i\cup x, [M]\rangle.
\]
If the operations $Sq^1, Sq^2, \ldots, Sq^{2^k}$ into $H^n(M)$ vanish, then the classes $v_i$ are zero for $i < 2^{k+1}$.  The Stiefel-Whitney classes are in turn determined by the Wu classes via the relation
\[
w_i(\tau)=\sum_{a+b=i}Sq^a v_b.
\]
More concisely, the total Stiefel-Whitney class $w(\tau)$ is $Sq(v)$, the total Steenrod operation on the total Wu class.  The vanishing of the Wu classes implies that $w_i(\tau)=0$ for $i < 2^{k+1}$; the inverse classes $w_i(-\tau)$ therefore vanish in the same range.  Finally the Thom identity
\[
Sq^i u = w_i(-\tau) \cup u
\]
shows that the corresponding Steenrod operations on the Thom class vanish.  Theorem~\ref{thm:ZeroThom} ensures that the cohomology is Poincar\'e self-dual with respect to $\A(k)$.  The converse is immediate.
\end{proof}

\nid The corresponding odd-primary result is that the operations $\mP^1, \mP^p, \ldots, \mP^{p^{k-1}}$ into the top degree cohomology $H^n(M;\F_p)$ vanish if and only if the oriented manifold $M$ is Poincar\'e self-dual with respect to $\A(k)_p$.

The proof of Theorem~\ref{thm-topops} illustrates the fact that we can rephrase the vanishing of those top Steenrod operations in terms of the vanishing of characteristic classes.  For instance, in the proof of Proposition~\ref{prop-string} we already saw that a manifold is Poincar\'e self-dual with respect to $\A(2)_2$ if and only if $w_1$, $w_2$, and the modulo 2 reduction $\left[\frac{p_1}{2}\right]_2$ all vanish; similarly, Poincar\'e self-duality with respect to $\A(1)_3$ is equivalent to the vanishing of $\left[p_1\right]_3$.  Though the corresponding $p$-primary statements for $p > 3$ are less closely related to string manifolds, Poincar\'e self-duality in those cases nevertheless detects interesting integrality conditions on polynomials in the Pontryagin classes, as follows.  By ``the $k^{\text{th}}$ Newton polynomial" we will mean the polynomial $N_k$ expressing the $k^{\text{th}}$ power sum polynomial in terms of the elementary symmetric polynomials; that is, $N_k(\sigma_1, \ldots, \sigma_k) = \sum_{i} x_i^k$.

\begin{prop}
Let $p$ be an odd prime.  An oriented manifold $M$ is Poincar\'e self-dual with respect to the algebra $\A(1)_p$ if and only if the $(\frac{p-1}{2})^{\text{th}}$ Newton polynomial in the Pontryagin classes of $M$ is divisible by $p$.
\end{prop}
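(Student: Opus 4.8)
The plan is to reduce the assertion, via the odd-primary form of Theorem~\ref{thm-topops}, to a computation of the first $p$-primary Wu class of $M$ in terms of Pontryagin classes, and then to carry out that computation by the splitting principle.

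First I would unwind the definitions. Since $\A(1)_p = \langle \beta, \mP^1\rangle$ and every oriented manifold is automatically Poincar\'e self-dual with respect to the Bockstein, the $k=1$ case of the odd-primary statement recorded after Theorem~\ref{thm-topops} says that $M$ is Poincar\'e self-dual with respect to $\A(1)_p$ if and only if the single operation $\mP^1 \colon H^{n-2(p-1)}(M;\F_p) \ra H^n(M;\F_p)$ is zero. Let $v_1 \in H^{2(p-1)}(M;\F_p)$ denote the $p$-primary Wu class, characterized by $\langle \mP^1 x,[M]\rangle = \langle v_1\cup x,[M]\rangle$ for all $x \in H^{n-2(p-1)}(M;\F_p)$. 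Because the cup-product pairing $H^{2(p-1)}(M;\F_p)\otimes H^{n-2(p-1)}(M;\F_p) \ra \F_p$ is perfect, exactly as in the proof of Theorem~\ref{thm-topops} the operation $\mP^1$ into top degree vanishes if and only if $v_1 = 0$. So it remains to identify $v_1$.

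As in the proof of Theorem~\ref{thm-topops}, the total Wu class and the tangent bundle are related by $\mP(v) = q(\tau)$, where $q_i(\xi) \in H^{2i(p-1)}(-;\F_p)$ is defined by the Thom identity $\mP^i u_\xi = q_i(\xi)\cup u_\xi$; comparing degree $2(p-1)$ components and using $\mP^1(1)=0$ gives $v_1 = q_1(\tau)$. To compute $q_1$ I would invoke the splitting principle: since $q_1$ is a universal stable characteristic class, hence a polynomial in the modulo $p$ Pontryagin classes, it suffices to evaluate it on a sum $\xi = \xi_1\oplus\cdots\oplus\xi_m$ of oriented $2$-plane bundles with Euler classes $e_i$, for which $p_j(\xi) = \sigma_j(e_1^2,\ldots,e_m^2)$. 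For a single oriented $2$-plane bundle the relation $u_{\xi_i}^2 = e_i\cup u_{\xi_i}$ iterates to $u_{\xi_i}^k = e_i^{k-1}\cup u_{\xi_i}$, so $\mP^1 u_{\xi_i} = u_{\xi_i}^p = e_i^{p-1}\cup u_{\xi_i}$ and $q(\xi_i) = 1 + e_i^{p-1}$. Multiplicativity of the Thom class and of the total power operation gives $q(\xi) = \prod_i(1+e_i^{p-1})$, whose degree $2(p-1)$ part is $q_1(\xi) = \sum_i e_i^{p-1} = \sum_i (e_i^2)^{(p-1)/2}$; by the defining property of the Newton polynomial as the power sum expressed in the elementary symmetric functions, this is $N_{(p-1)/2}(p_1(\xi),\ldots,p_{(p-1)/2}(\xi))$ reduced modulo $p$ (note $4\cdot\tfrac{p-1}{2} = 2(p-1)$, matching the degree of $v_1$).

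Putting the pieces together, $v_1 = q_1(\tau) = \bigl[N_{(p-1)/2}(p_1(M),\ldots,p_{(p-1)/2}(M))\bigr]_p$, the modulo $p$ reduction of an integral class, the Newton polynomials having integer coefficients. By exactness of $H^{2(p-1)}(M;\ZZ)\xrightarrow{\cdot p} H^{2(p-1)}(M;\ZZ)\xrightarrow{\rho} H^{2(p-1)}(M;\F_p)$, this reduction vanishes if and only if the integral class $N_{(p-1)/2}(p_1(M),\ldots,p_{(p-1)/2}(M))$ is divisible by $p$, which with the first two equivalences is the proposition. The one point requiring care is the normalization of the $p$-primary Wu class, i.e.\ the identity $v_1 = q_1(\tau)$ and the question of whether one works with $\tau$ or $-\tau$ — for a vanishing statement this is immaterial, since $q_1(-\tau) = -q_1(\tau)$ — with the splitting-principle evaluation of $q_1$ being otherwise routine. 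As consistency checks, for $p=3$ the criterion is divisibility of $N_1(p_1)=p_1$ by $3$, and for $p=5$ divisibility of $N_2(p_1,p_2) = p_1^2 - 2p_2 \equiv p_1^2 + 3p_2$ by $5$, recovering the Thom identities in the proofs of Propositions~\ref{prop-string} and~\ref{prop-5brane}.
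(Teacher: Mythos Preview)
Your proof is correct and follows essentially the same route as the paper: both arguments identify Poincar\'e self-duality for $\A(1)_p$ with the vanishing of the first $p$-primary Wu/Milnor--Wu class $q_1(\tau)$, and then identify $q_1$ with the mod~$p$ reduction of $N_{(p-1)/2}(p_1,\dots,p_{(p-1)/2})$. The only differences are cosmetic: the paper cites Milnor for the Newton-polynomial description of $q_1$ and uses the Thom identity $\mP^1 u = q_1(-\tau)\cup u$ (together with $q_1(-\tau)=-q_1(\tau)$) to run the converse direction via Theorem~\ref{thm:ZeroThom}, whereas you invoke the odd-primary Theorem~\ref{thm-topops} for both directions and supply the splitting-principle computation of $q_1$ yourself.
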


\begin{proof}
The $(\frac{p-1}{2})^{\text{th}}$ Newton polynomial in the modulo $p$ Pontryagin classes is called the first $p$-primary Milnor-Wu class $q_1$~\cite{milnor-ccsfs}.  This class satisfies a Wu identity $\langle \mP^1 x,[M]\rangle=\langle q_1(\tau) \cup x, [M]\rangle$, a Thom identity $\mP^1 u = q_1(-\tau) \cup u$, and the inverse relation $q_1(-\tau) = - q_1(\tau)$.  If the manifold $M$ is Poincar\'e self-dual with respect to $\A(1)_p$, then the operation $\mP^1$ into the top degree vanishes; the class $q_1(\tau)$ vanishes in turn.  Conversely, the orientability of the manifold $M$ ensures that $\beta u$ is zero; if the class $q_1(\tau)$ is zero, then $\mP^1$ annihilates the Thom class $u$ and the manifold is Poincar\'e self-dual, as desired.
\end{proof}

\nid For example, a manifold is Poincar\'e self-dual with respect to $\A(1)_5$, $\A(1)_7$, or $\A(1)_{11}$ if and only if respectively $p_1^2 + 3 p_2$ is 5-divisible, $p_1^3 + 4 p_1 p_2 + 3 p_3$ is 7-divisible, or $p_1^5 + 6 p_1^3 p_2 + 5 p_1 p_2^2 + 5 p_1^2 p_3 + 6 p_1 p_4 + 6 p_2 p_3 + 5 p_5$ is 11-divisible.  There are similar integrality conditions corresponding to Poincar\'e self-duality with respect to $\A(k)_p$ in general.  These are derived by expressing the Milnor-Wu classes $q_i$, for which $q_i(-\tau) \cup u = \mP^i u$, as polynomials in the modulo~$p$ Pontryagin classes---a manifold is Poincar\'e self-dual with respect to $\A(k)_p$ if and only if the classes $\{q_i \, | \, 1 \leq i \leq p^k - 1\}$ vanish, therefore if and only if the integral Pontryagin polynomials corresponding to those $q_i$ are $p$-divisible. 

\section{Examples} \label{sec-examples}

In this section we give a variety of examples illustrating the recognition principles described above.  Table~\ref{table-chart} lists the various manifolds we consider, in each case indicating whether the manifold is spin, string, or 5-brane, and noting what phenomenon obstructs the existence of the first of those structures it fails to possess.  The cohomologies of many of these manifolds, as modules over the appropriate subalgebras of the Steenrod algebra, are drawn in Figures~\ref{pic-1},~\ref{pic-2}, and~\ref{pic-3}---unless otherwise indicated the picture is of the modulo 2 cohomology.  In the appendix, we include a complete depiction of the algebra $\A(2)$ at the prime 2, which is helpful in computing cohomology of spaces as $\A(2)$-modules, therefore in detecting obstructions to string orientations.

\begin{table}[ht]
\centering
\caption{Tangential structures on homogeneous spaces.} \label{table-chart}
\renewcommand{\arraystretch}{1.2}
\begin{tabular}{|c|c|c|c|c|}
\hline
Space & Spin? & String? & 5-Brane? & Obstruction \\
\hline
$\RP^{4n+1}$ & No & No & No & $Sq^2$ asymmetry \\ \hline
$\RP^{8n+3}$ & Yes & No & No & $Sq^4$ asymmetry \\ \hline
$\RP^{16n+7}$ & Yes & Yes & No & $Sq^8$ asymmetry \\ \hline
$\CP^{2n}$ & No & No & No & $Sq^2$ asymmetry \\ \hline
$\CP^{6n-3}, \CP^{6n+1}$ & Yes & No & No & $\mP^1$ asymmetry \\ \hline
$\CP^{4n+1}$ & Yes & No & No & $Sq^4$ asymmetry \\ \hline
$\CP^{11}$ & Yes & No & No & $\frac{p_1}{2}=6$ \\ \hline 
$\HP^{2n}$ & Yes & No & No & $Sq^4$ asymmetry \\ \hline
$\HP^{3n}, \HP^{3n-1}$ & Yes & No & No & $\mP^1$ asymmetry \\ \hline
$\HP^7$ & Yes & No & No & $\frac{p_1}{2}=6$ \\ \hline 
$G_2/U(2)$ & No & No & No & $Sq^2$ asymmetry \\ \hline 
$G_2/SO(4)$ & Yes & No & No & $Sq^4$ asymmetry \\ \hline
$G_2/T$ & Yes & Yes & Yes & ------ \\ \hline
$F_4/Spin(9)$ & Yes & Yes & No & $Sq^8$ asymmetry \\ \hline
$F_4/G_2$ & Yes & Yes & ? & $\frac{p_2}{6}$ \\ \hline
\end{tabular}
\end{table}

In Figure~\ref{pic-1}, the real projective spaces vividly display the 2-primary obstructions.  The progression begins with $\RP^2$, which is not oriented due to a $Sq^1$ asymmetry.  The remaining cases $\RP^5$, $\RP^{11}$, $\RP^{23}$ are all $Sq^1$-symmetric, but in turn fail to be $Sq^2$-, $Sq^4$-, and $Sq^8$-symmetric, thereby fail to be spin, string, and 5-brane, respectively.  The complex projective spaces $\CP^2$ and $\CP^5$ and the quaternionic projective space $\HP^2$ are similar examples.  The spaces $\CP^3$ and $\HP^3$ provide our first 3-primary obstructions, with $\mP^1$ asymmetries preventing the existence of string orientations.  As described in Remark~\ref{remark-converse}, the spaces $\CP^{11}$ and $\HP^7$ demonstrate the failure of a converse recognition principle.

A more involved computation shows that the modulo 2 cohomology of the quotient of $G_2$ by either the short root $U(2)$ or the long root $U(2)$ appears as in Figure~\ref{pic-2}; neither, therefore, is spin.  The quotient $G_2/SO(4)$ provides perhaps the clearest and most elegant example of our string obstruction principle.  As a module over $\A(1)_2$, the cohomology $H^\ast(G_2/SO(4);\F_2)$ is symmetric, realizing in degrees $2$ through $6$ the ``joker" Steenrod module.  Still, there is an apparent $Sq^4$ operation asymmetry, and this homogeneous space is therefore not string.  Note that even if the $Sq^4$ operation in $H^\ast(G_2/SO(4);\F_2)$ were absent, this cohomology module would still fail to have ``string symmetry": the nontrivial $Sq^2 Sq^2 = \chi(Sq^4)$ in degree $2$ has no symmetrically corresponding $Sq^4$ operation.  There exist bases of the modulo 2 and modulo 3 cohomology of the $G_2$-flag manifold $G_2/T$ such that the Steenrod operations appear as in Figure~\ref{pic-2}.  These cohomologies are symmetric with respect to the full algebras $\A_2$ and $\A_3$.  This manifold is string, as can be determined directly by analyzing the fiber bundle $SU(3)/T \ra G_2/T \ra G_2/SU(3)$.  In fact, because it is the quotient of a Lie group by an abelian subgroup, the manifold $G_2/T$ is stably framed.

As with $G_2$, there are a variety of curious homogeneous spaces associated to $F_4$.  The chain of inclusions $G_2 \subset Spin(7) \subset Spin(9) \subset F_4$ is one source of such examples.  The quotient $F_4/Spin(9)$ is the octonionic projective plane---its cohomology is drawn in Figure~\ref{pic-3}.  It is evidently string, but is not 5-brane by Proposition~\ref{prop-5brane}.  The homogeneous space $F_4/G_2$ is more subtle: its cohomology---see Figure~\ref{pic-3}---is symmetric with respect to the full Steenrod algebra at every prime, but there remains a potential secondary obstruction to 5-brane orientability.

\vfill

\renewcommand{\tablename}{}

\begin{table}[!h]
\begin{center}
Notation for Figures~\ref{pic-1},~\ref{pic-2}, and~\ref{pic-3}: \vspace{8pt}
\renewcommand{\arraystretch}{1.40}
\begin{tabular}{ccccccc}
\ingn{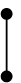} & \ingn{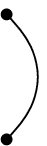} & \ingn{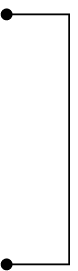} & \ingn{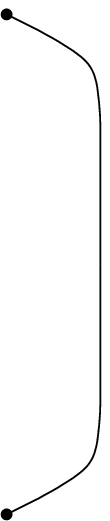} & \ingn{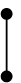} & \ingn{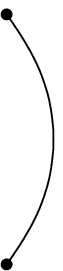} & \ingn{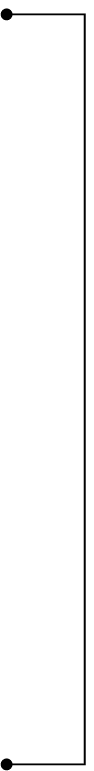} \\
\hspace{5pt} $Sq^1$ \hspace{5pt} &  \hspace{5pt} $Sq^2$ \hspace{5pt}  &  \hspace{5pt} $Sq^4$ \hspace{5pt}  &  \hspace{5pt} $Sq^8$ \hspace{5pt}  &  \hspace{5pt} $\beta$ \hspace{5pt}  &  \hspace{5pt} $\mP^1$ \hspace{5pt}  &  \hspace{5pt} $\mP^3$ \hspace{5pt} 
\end{tabular}
\end{center}
\end{table}

\renewcommand{\tablename}{Figure}

\clearpage

\topmargin=-18pt
\oddsidemargin=20pt
\evensidemargin=20pt

\thispagestyle{empty}

\begin{sidewaystable}[!p] 
\begin{center}
\renewcommand{\arraystretch}{1.25}
\caption{Cohomology of projective spaces.} \label{pic-1} \vspace{-14pt}

\begin{tabular}{|c|c|c|c|c|c|c|c|c|c|c|c|c|}

\hline
\ingm{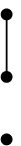} & \ingm{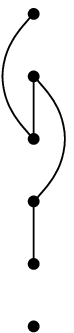} & \ingm{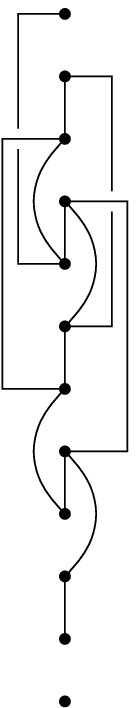} & \ingm{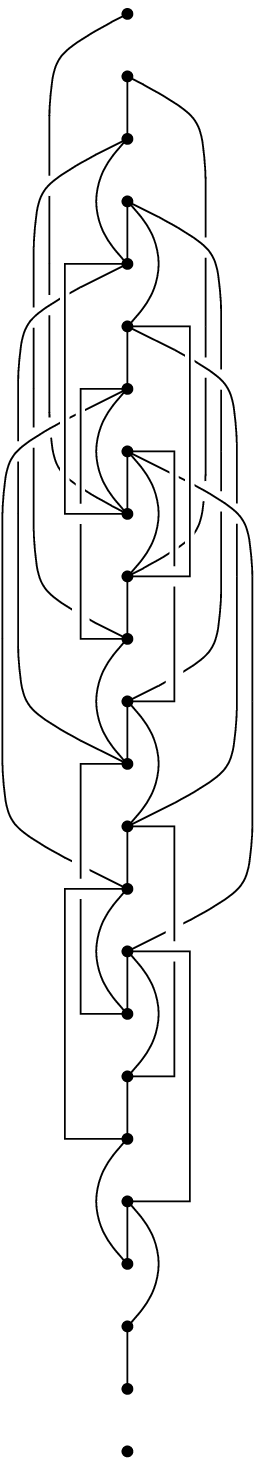} & 
\ingm{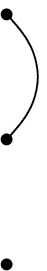} & \ingm{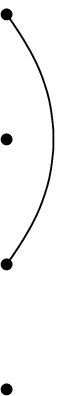} & \ingm{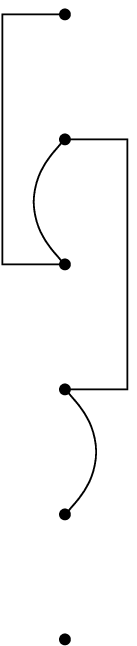} & \ingm{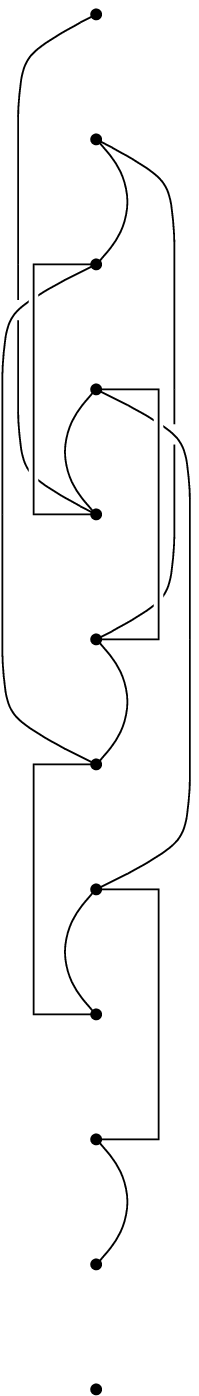} & \ingm{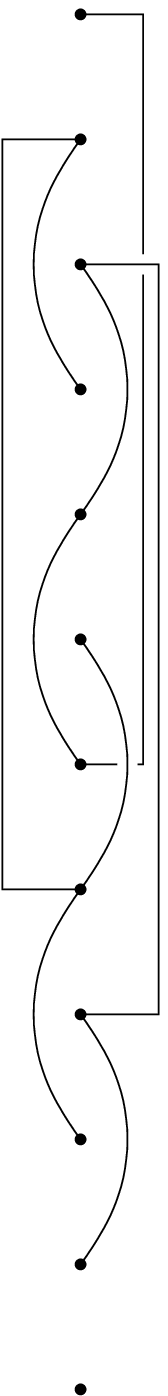} &
\ingm{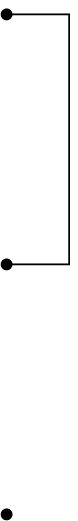} & \ingm{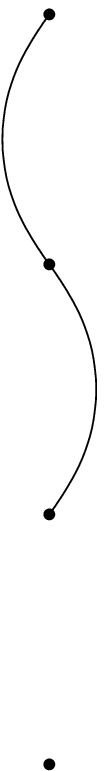} & \ingm{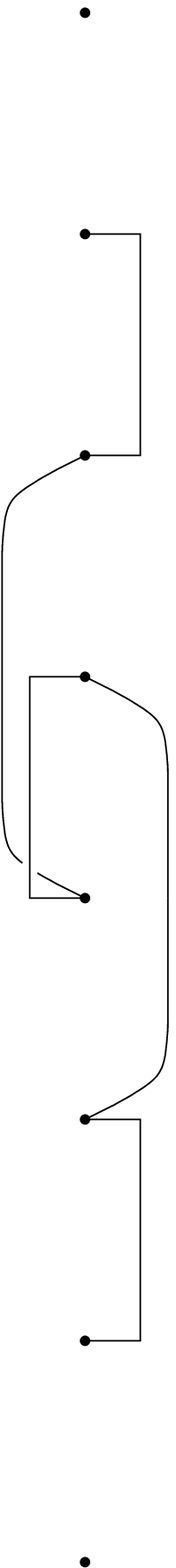} & \makebox{\rule{0pt}{517pt} \hspace{-6pt} \ingm{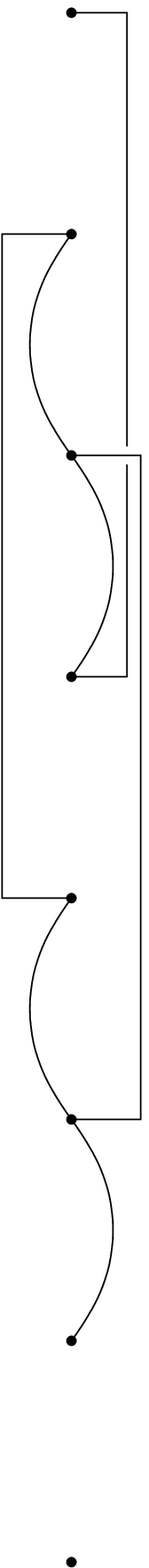}} \put(4.5,1.2){\ing{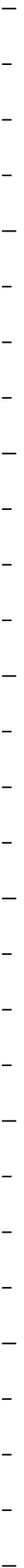}} 
\put(11,0){$\scriptstyle 0$}
\put(11,72.25){$\scriptstyle 4$}
\put(11,144.5){$\scriptstyle 8$}
\put(11,216.75){$\scriptstyle 12$}
\put(11,289){$\scriptstyle 16$}
\put(11,361.25){$\scriptstyle 20$}
\put(11,433.5){$\scriptstyle 24$}
\put(11,505.75){$\scriptstyle 28$}
\\ 
\hline
$\RP^2$ & $\RP^5$ & $\RP^{11}$ & $\RP^{23}$ & 
$\CP^2$ & $\CP^3$, $p=3$ & $\CP^5$ & $\CP^{11}$ & $\CP^{11}$, $p=3$ &
$\HP^2$ & $\HP^3$, $p=3$ & $\HP^7$ & $\HP^7$, $p=3$ \\
\hline
\end{tabular}
\end{center}
\end{sidewaystable}

\clearpage

\topmargin=1.5in
\oddsidemargin=25pt
\evensidemargin=25pt

\begin{table}[!p]
\begin{center}
\renewcommand{\arraystretch}{1.25}
\caption{Cohomology of $G_2$-homogeneous spaces.} \label{pic-2}

\begin{tabular}{|c|c|c|c|c|c|}

\hline
\ing{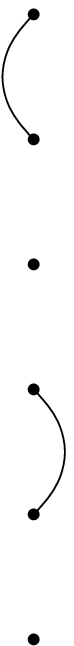} & \ing{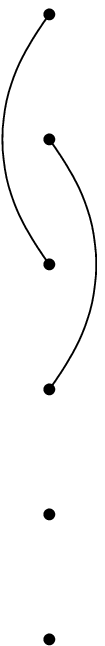} & \ing{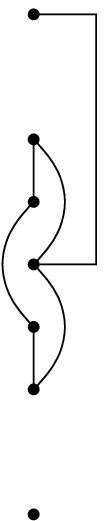} & 
\ing{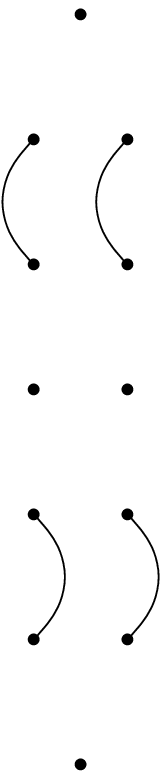} & \ing{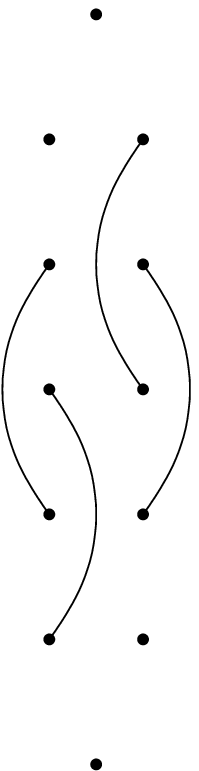} & \makebox{\rule{0pt}{263pt} \hspace{-6pt} \ing{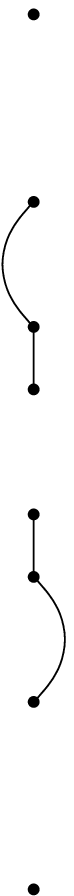}} 
\put(4.5,1.2){\ing{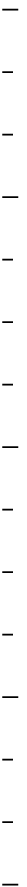}} 
\put(11,0){$\scriptstyle 0$}
\put(11,54.2){$\scriptstyle 3$}
\put(11,126.45){$\scriptstyle 7$}
\put(11,198.7){$\scriptstyle 11$}
\put(11,252.9){$\scriptstyle 14$}
\\ 
\hline
$G_2/U(2)_{s \text{ or } l}$ & $G_2/U(2)_s$, $p=3$ & $G_2/SO(4)$ & 
$G_2/T$ & $G_2/T$, $p=3$ & $G_2$ \\
\hline
\end{tabular}
\end{center}
\end{table}

\clearpage

\topmargin=-30pt

\begin{table}[!p]
\begin{center}
\renewcommand{\arraystretch}{1.25}
\thispagestyle{empty}
\caption{Cohomology of $F_4$-homogeneous spaces.} \label{pic-3}

\begin{tabular}{|c|c|c|c|c|}

\hline
\ings{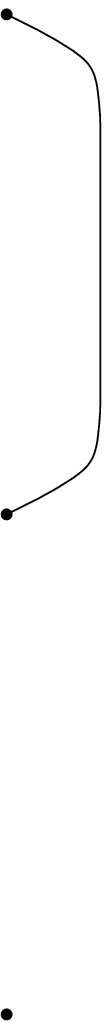} & \ings{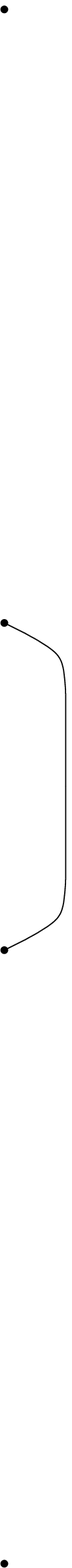} & \ings{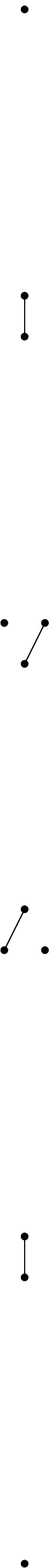} & \ings{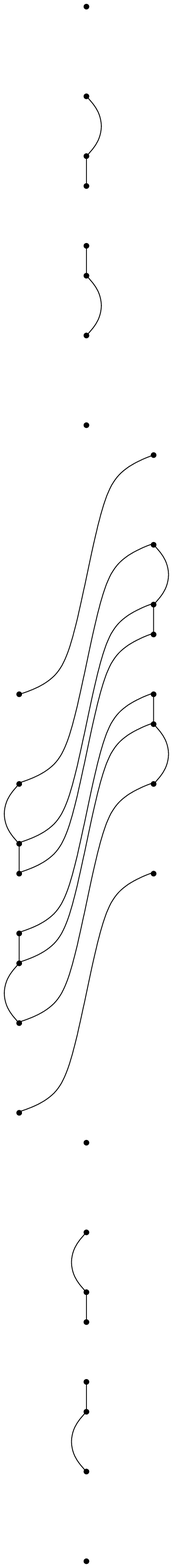} & \makebox{\rule{0pt}{621pt} \hspace{-6pt} \ings{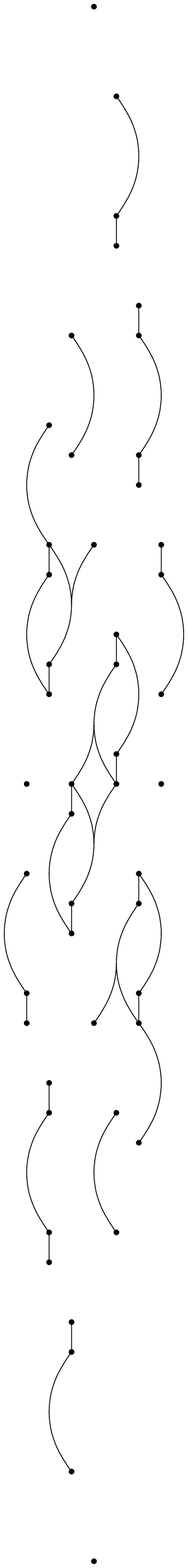}}  
\put(4.5,0.2){\ingsf{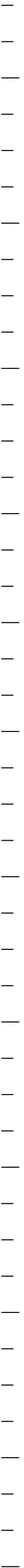}} 
\put(12,-1.5){$\scriptstyle 0$}
\put(12,33.76){$\scriptstyle 3$}
\put(12,80.72){$\scriptstyle 7$}
\put(12,127.70){$\scriptstyle 11$}
\put(12,174.70){$\scriptstyle 15$}
\put(12,221.68){$\scriptstyle 19$}
\put(12,268.66){$\scriptstyle 23$}
\put(12,303.90){$\scriptstyle 26$}
\put(12,339.14){$\scriptstyle 29$}
\put(12,386.12){$\scriptstyle 33$}
\put(12,433.10){$\scriptstyle 37$}
\put(12,480.10){$\scriptstyle 41$}
\put(12,527.08){$\scriptstyle 45$}
\put(12,574.06){$\scriptstyle 49$}
\put(12,609.3){$\scriptstyle 52$}
\\
\hline
$F_4/Spin(9)$ & \hspace{5pt} $F_4/G_2$ \hspace{5pt} & $F_4/G_2$, $p=3$ & $F_4$ & $F_4$, $p=3$ \\
\hline
\end{tabular}
\end{center}
\end{table}
\clearpage


\appendix

\setcounter{section}{-1}

\section*{Appendix. The subalgebra $\A(2)$ of the Steenrod algebra}

\topmargin=0pt
\textheight=603pt 
\textwidth=420pt  
\oddsidemargin=25pt
\evensidemargin=25pt

Here we include a portrait of the subalgebra of the 2-primary Steenrod algebra generated by $Sq^1$, $Sq^2$, and $Sq^4$.  

\vspace{.2cm}

\psfrag{1}{$1$}
\psfrag{Sq1}{$Sq^1$}
\psfrag{Sq2}{$Sq^2$}
\psfrag{Sq3}{$Sq^3$}
\psfrag{Sq4}{$Sq^4$}
\psfrag{Sq5}{$Sq^5$}
\psfrag{Sq6}{$Sq^6$}
\psfrag{Sq7}{$Sq^7$}
\begin{center}\epsfig{file=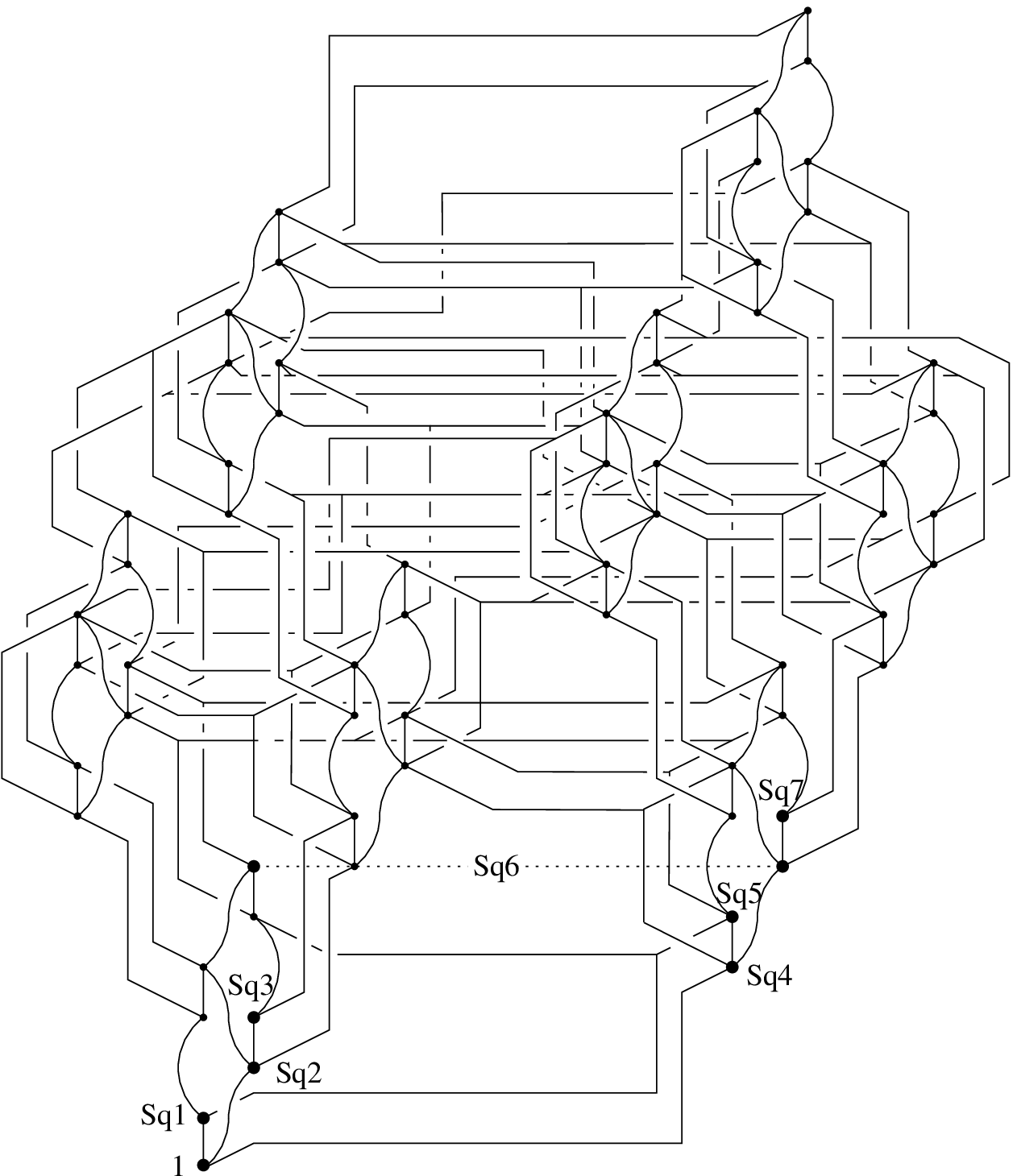, height=15cm}
\end{center}

\vspace{.2cm}

The relations are $Sq^1 Sq^1 = 0$, $Sq^2 Sq^2 + Sq^1 Sq^2 Sq^1 = 0$, $Sq^4 Sq^1 + Sq^1 Sq^4 + Sq^2 Sq^1 Sq^2 = 0$, and $Sq^4 Sq^4 + Sq^2 Sq^2 Sq^4 + Sq^2 Sq^4 Sq^2 = 0$, or pictographically, as follows:
\psfrag{+}{$+$}
\psfrag{=0}{$=0,$}
\psfrag{=4}{$=0$.}
\begin{center}\epsfig{file=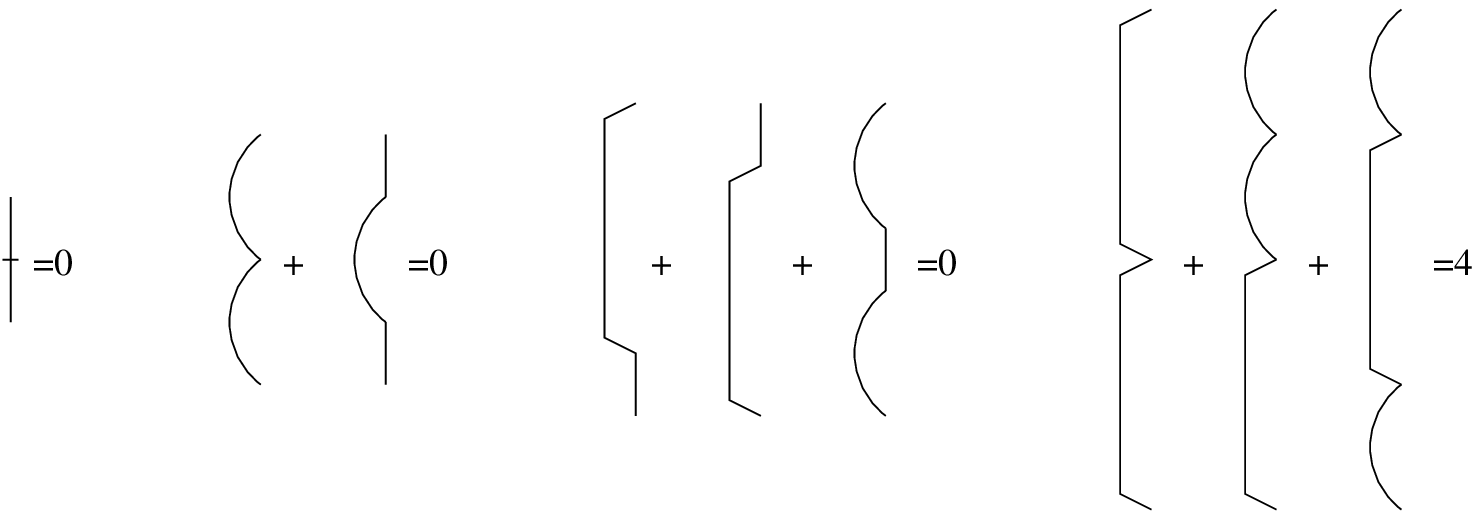, height=3cm}
\end{center}

\pagebreak

\bibliography{dhh}
\bibliographystyle{plain}

\end{document}